\def\namedlabel#1#2{\begingroup
 #2%
 \def\@currentlabel{#2}%
 \phantomsection\label{#1}\endgroup
}
\theoremstyle{plain}
\newtheorem*{theorem*}{Theorem}
\newtheorem*{thmex*}{Theorem~\ref{example}}
\newtheorem*{thmasymp*}{Theorem~\ref{thmAsymp}}
\newtheorem{theorem}{Theorem}[section]
\newtheorem{case}[theorem]{Case}
\newtheorem{lemma}[theorem]{Lemma}
\theoremstyle{definition}
\newcommand{\es}{\emptyset}
\newcommand{\R}{\mathbb{R}}
\newcommand{\Z}{\mathbb{Z}}
\newcommand{\sn}[1]{{\mathbb{S}^{#1}}}
\newcommand{\hn}[1]{{\mathbb{H}^{#1}}}
\newcommand{\ben}{\begin{enumerate}}
\newcommand{\een}{\end{enumerate}}
\newcommand{\wt}{\widetilde}
\newcommand{\g}{\gamma}
\renewcommand{\a}{\alpha}
\newcommand{\cA}{{\mathcal A}}
\newcommand{\cC}{{\mathcal C}}
\newcommand{\D}{\Delta}
\newcommand{\cT}{{\mathcal T}}
\newcommand{\norma}[1]{\Vert #1 \Vert}
\newcommand{\ed}{\end{document}}
\definecolor{rrr}{rgb}{.9,0,.1}
\definecolor{rr}{rgb}{.8,0,.3}
\begin{document}

\title{A nonexistence result for CMC surfaces in hyperbolic 3-manifolds}

\author{William H. Meeks III
\and Alvaro K.~Ramos\thanks{The authors were partially supported
by CNPq - Brazil, grant no. 400966/2014-0.}.}

\maketitle

\begin{abstract}
We prove that a complete hyperbolic 3-manifold of finite volume does
not admit a properly embedded noncompact surface of finite topology
with constant mean curvature greater than or equal to 1.

\vspace{.15cm}
\noindent{\it Mathematics Subject Classification:} Primary 53A10, Secondary 49Q05, 53C42.

\vspace{.1cm}

\noindent{\it Key words and phrases:} Constant mean curvature,
hyperbolic $3$-manifolds, Calabi-Yau problem.
\end{abstract}

\section{Introduction.}

We continue the study of properly immersed surfaces of constant
mean curvature $H$ in hyperbolic 3-manifolds of finite volume
that began with the works of Collin, Hauswirth, Mazet
and Rosenberg \cite{chmr1,chr2} in the minimal case, and was extended to the
$H\in(0,1)$ case by the authors~\cite{meramos1}.

In
this paper we prove:

\begin{theorem}\label{thmtwo}
A complete hyperbolic 3-manifold of finite volume does
not admit a properly embedded noncompact surface of finite topology
with constant mean curvature $H\geq 1$.
\end{theorem}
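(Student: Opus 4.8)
The plan is to use the thick--thin decomposition of $N$ to localize the problem to a cusp, and then to obstruct the relevant ends by combining the foliation of the cusp by constant--mean--curvature--one horospheres with curvature and maximum--principle estimates for embedded constant--mean--curvature surfaces, the hypothesis $H\ge 1$ being exactly the borderline case. Passing if necessary to the orientation double cover, we may assume $N$ is orientable. Since $\Sigma$ is noncompact and properly embedded, $N$ is noncompact, hence — being of finite volume — has compact thick part and a thin part consisting of finitely many rank--two cusps $C_i\cong T^2\times[0,\infty)$ and Margulis tubes, of which only the cusps are noncompact. Thus $\Sigma$ has an end $E$ which, by properness together with compactness of the thick part, we may take (after deleting a compact piece and passing to a sub-end) to be an annulus or a half-plane contained in a single cusp $C\cong T^2\times[0,\infty)$, with $\partial E\subset T^2\times\{0\}$, with $E\cap(T^2\times[0,T])$ compact for all $T$, and with $E$ exiting the cusp tip $t\to\infty$. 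Since $H\ge 1>0$, the mean curvature vector $\vec H_E$ never vanishes, so $E$ is two-sided, co-oriented by $\nu:=\vec H_E/|\vec H_E|$. It suffices to show that no such $E$ can exist.

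\textbf{The horosphere foliation.} The universal cover of $C$ is a horoball $\cH=\{z>z_0\}\subset\hn{3}$ in the upper half-space model, with deck group $\cong\Z^2$ of parabolic translations fixing $\infty$ and with the cusp tip corresponding to $z\to\infty$. The Euclidean planes $\{z=\text{const}\}$ descend to a foliation of $C$ by flat tori $\{T_s\}_{s\ge 0}$, each of constant mean curvature exactly $1$ with mean curvature vector pointing toward the tip; let $b\colon C\to[0,\infty)$ be the associated Busemann ``depth'' function, so that $T_s=\{b=s\}$, $|\nabla b|\equiv 1$, $b$ is proper and $1$-Lipschitz, $\operatorname{Hess}b=g-db\otimes db$ (with $g$ the hyperbolic metric), and $\sup_E b=\infty$. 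The tori $T_s$ are the natural comparison surfaces, and the significance of $H\ge 1$ is that $E$ is at least as mean-convex as every leaf of this foliation. For $H<1$ one has $\Delta_E b = 1+\langle\nabla b,\nu\rangle^2+2H\langle\nabla b,\nu\rangle\ge 1-H^2>0$, so $b|_E$ is strictly subharmonic, and this — via a flux/divergence identity, the finiteness of $\operatorname{vol}(C)$, and curvature estimates for embedded constant--mean--curvature surfaces — forces $E$ to have finite area, contradicting the monotonicity formula. For $H\ge 1$ this mechanism fails precisely because the leaves have mean curvature $1$, and a different argument is needed.

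\textbf{Obstructing the end; the main obstacle.} I would still aim to show that $E$ has finite total area and conclude by the monotonicity formula: a properly embedded noncompact surface with $H\ge 1$ contains, along any sequence of its points escaping to infinity at mutually unbounded distances (available because $b|_E$ is unbounded and $1$-Lipschitz), infinitely many pairwise disjoint geodesic balls of a fixed small radius, each carrying a definite amount of area — so finite total area is impossible. To produce finite area, the steps would be: (a) curvature estimates for embedded constant--mean--curvature surfaces, which apply to $E$ deep in the cusp — lifting to $\cH\subset\hn{3}$ the ambient geometry is uniformly controlled and $\partial E$ is far away — giving $|A_E|\le C$ near the tip; (b) a maximum principle at infinity for surfaces with $H\ge 1$: via the Riccati comparison $\kappa'=1-\kappa^2$ together with $\kappa_1^2+\kappa_2^2\ge 2H^2\ge 2$ (so that equidistant surfaces of a CMC-$H$ surface have mean curvature $\le H$, with equality only for horospherical umbilics), and via comparison with the leaves $T_s$ of the foliation, one shows that distinct sheets of $E$ cannot accumulate on one another, so that $E$ admits a uniform embedded tubular neighborhood near the tip; (c) that tube then lies in a region $\{b\le s+C\}$ of volume at most $\operatorname{vol}(C)<\infty$, whence $\operatorname{area}(E\cap\{b\le s\})$ is bounded uniformly in $s$ and $E$ has finite total area. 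The crux — the step I expect to be hardest — is (b) together with (a): establishing the embedded--constant--mean--curvature curvature estimates and the one-sided (maximum-principle-at-infinity) estimates for the borderline value $H\ge 1$, where one cannot use the subharmonicity of $b|_E$ that settles the range $H<1$ and must instead exploit the geometry of the horosphere foliation; the subcase $H=1$, in which the horospheres are exact solutions so that the maximum principle is only marginally effective, is the most delicate, and one must also check that the single branch point or non-generic intersection of $E$ with a leaf $T_s$ forces, by unique continuation for the constant--mean--curvature equation, that $E$ coincides with $T_s$ and hence is compact.
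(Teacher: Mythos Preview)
Your reduction to an annular end inside a single cusp is correct and matches the paper's. The overall architecture you propose --- curvature bounds, a one-sided tubular neighborhood, then playing finite ambient volume against infinite surface area --- is close in spirit to \emph{part} of the paper's argument. But the proposal leaves its decisive steps (a) and (b) open, and it misses a structural dichotomy that the paper uses in an essential way.

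The paper observes that the inclusion $i:\Delta\hookrightarrow\cC$ of the annular end into the cusp has $i_*(\pi_1(\Delta))$ either trivial or infinite cyclic in $\pi_1(\cC)\cong\Z^2$, and it treats these two cases by completely different methods.

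\emph{Case $i_*$ trivial.} Here $\Delta$ lifts homeomorphically to a properly embedded planar domain $\widetilde\Delta$ in a horoball $B\subset\hn3$. The paper does not attempt to prove your step (a) from scratch; it invokes the asymptotic-structure theorems of Collin--Hauswirth--Rosenberg (for $H=1$) and Korevaar--Kusner--Meeks--Solomon (for $H>1$), which say that $\widetilde\Delta$ is $C^2$-asymptotic to a rotationally symmetric model end (horosphere, embedded catenoid cousin, or Delaunay). Bounded $|A|$ and infinite area follow from the model. The tube/volume step is then carried out \emph{upstairs} in $B$, which is homogeneously regular so the one-sided regular neighborhood exists; the additional point is that for every parabolic covering translation $\sigma$ one has $\sigma(M)\cap M=\emptyset$, so the infinite-volume mean-convex region $M$ injects into $\cC$, contradicting $\mathrm{vol}(\cC)<\infty$. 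Your step (b) is delicate precisely because the cusp is \emph{not} homogeneously regular (injectivity radius $\to 0$ at the tip), so the tubular-neighborhood argument cannot be run directly in $\cC$; one must work in $B$ and then control how the $\Z^2$-translates of $M$ interact. Your proposal does not address this, and your step (a) is in practice a citation of deep end-classification results rather than an elementary estimate.

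\emph{Case $i_*$ nontrivial.} Now the lift $\widetilde\Delta\subset B$ is invariant under a parabolic translation $\theta$, the mean-convex region $M$ is $\theta$-periodic, and the disjoint-translates argument above fails (a $\theta$-invariant region may well be nested under a transverse translation). Your finite-area mechanism has no evident replacement here. The paper abandons the volume approach entirely and uses a direct barrier argument: since $\partial\widetilde\Delta$ lies in a strip of bounded width about the axis of $\theta$ in $\partial B$, any interior point $p\in\widetilde\Delta$ can be separated from $\partial\widetilde\Delta$ by a tilted equidistant plane $L$ of constant mean curvature $H_0=\cos\alpha<1$, and one then sweeps toward $L$ by a foliation of such $H_0$-hypersurfaces (each meeting $B$ in a compact set) until first interior tangential contact with $\widetilde\Delta$. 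The mean-curvature comparison principle at that point yields $H\le H_0<1$, contradicting $H\ge 1$. Nothing of this kind appears in your outline; in particular your horosphere leaves $T_s$ have mean curvature exactly $1$ and give no strict inequality, which is why the paper uses sub-$1$ barriers instead.

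In summary: once the curvature estimates are sourced from the end-classification theorems and the argument is moved to the horoball with the translate-disjointness step added, your (a)--(c) scheme recovers the paper's first case. The second case, however, requires a genuinely different idea --- a foliation by barriers of mean curvature strictly less than $1$ and a first-touching argument --- which your proposal does not supply.
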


Theorem~\ref{thmtwo} contrasts with~\cite[Proposition~4.8]{meramos1},
where it is shown that, for any $H\geq 1$ and any noncompact
hyperbolic 3-manifold $N$ of finite volume, there exists a complete,
properly immersed annulus with constant mean curvature $H$. Therefore,
the hypothesis of embeddedness in Theorem~\ref{thmtwo} is  necessary.
Moreover, in~\cite{amr1}, together with Adams, we proved
that for any $H\in[0,1)$ and any surface $S$ of finite negative
Euler characteristic, there exists a
hyperbolic 3-manifold of finite volume with a proper embedding
of $S$ with constant mean curvature $H$.  Furthermore,
there are examples of closed surfaces in hyperbolic 3-manifolds of
finite volume for any $H\geq 1$; namely geodesic spheres and tori and
Klein bottles in its cusp ends.

The work of the first author with Tinaglia~\cite{mt11}
allows one to replace the hypothesis of properness in Theorem~\ref{thmtwo}
by the weaker assumption of completeness, since~\cite{mt11} shows that
any complete, embedded,
finite topology surface of constant mean curvature $H\geq1$ in
a complete hyperbolic 3-manifold is proper.
Finally, there remains the question of whether or not there exist
properly embedded surfaces of infinite topology
and constant mean curvature $H\geq 1$ in  hyperbolic 3-manifolds of finite
volume.

\section{Proof of Theorem~\ref{thmtwo}.}\label{secnonexist}

Theorem~\ref{thmtwo} follows directly from next lemma.

\begin{lemma}\label{propnonexistence}
A complete hyperbolic 3-manifold of finite volume $N$ does
not admit a proper embedding of
$A = \sn1\times[0,\infty)$
with constant mean curvature $H\geq 1$.
\end{lemma}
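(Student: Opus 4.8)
My plan is to localize the noncompact end of the embedded annulus inside a cusp of $N$ and rule out a constant mean curvature $H\geq 1$ end there, by a maximum‑principle comparison with horospheres and geodesic spheres together with an asymptotic description of the end.

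\emph{Reduction to a cusp.} By the Margulis lemma, $N$ is the union of a compact core and finitely many cusps, and each cusp $C$ is isometric to a quotient of the horoball $\{t\geq 1\}\subset\mathbb{H}^{3}$ by a rank‑two parabolic group; writing $s=\log t$ for the signed distance to the bounding horosphere, $C\cong\{s=0\}\times[0,\infty)$ is foliated by the flat tori (or Klein bottles) $\{s=c\}$, each of constant mean curvature $1$ with mean curvature vector pointing toward the cusp point. Since $A=\mathbb{S}^{1}\times[0,\infty)$ is connected, noncompact and properly embedded, its end eventually leaves the compact core, hence eventually lies in one cusp $C$. Picking a large regular value $s_{0}$ of $s|_{A}$ and letting $A'$ be the noncompact component of $A\cap\{s\geq s_{0}\}$ representing the end, I get a properly embedded surface $A'\subset\{s\geq s_{0}\}$ with compact boundary $\gamma\subset\{s=s_{0}\}$ whose single end converges to the cusp point. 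As $N$ has finite volume, so does $C$, and the standard density estimate for constant mean curvature surfaces (area in a metric ball of radius below the injectivity radius is $O(r^{2})$, with the constant depending only on $H$) gives $\mathrm{Area}(A')<\infty$; by the coarea formula $\int^{\infty}\mathrm{length}(A'\cap\{s=c\})\,dc\leq \mathrm{Area}(A')<\infty$, so there are regular values $c_{n}\to\infty$ with $\mathrm{length}(A'\cap\{s=c_{n}\})\to 0$.

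\emph{A maximum‑principle lemma.} Set $\rho=\mathrm{arccoth}(H)\in(0,+\infty]$ (so $\rho=+\infty$ when $H=1$). Then $A'$ cannot lie, near one of its interior points $p$, inside a closed metric ball $\overline{B}$ of radius $<\rho$ with $p\in A'\cap\partial B$: the sphere $\partial B$ has constant mean curvature $\coth(\mathrm{radius})>H$ with mean curvature vector pointing into $B$, so tangency of $A'$ to $\partial B$ at $p$ from the $\overline{B}$ side forces, by comparison of second fundamental forms, that $A'$ is more curved there than $\partial B$, whence $|H_{A'}(p)|\geq\coth(\mathrm{radius})>H$, contradicting that $A'$ has constant mean curvature $H$. (For $H=1$ the hypothesis on the radius is vacuous; for $H>1$ the borderline case radius $=\rho$, excluded here, is the one where one would instead invoke the strong maximum principle and real‑analyticity to conclude that $A'$ coincides with a geodesic sphere, which it is not.)

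\emph{The main step: asymptotics of the end, and the contradiction.} Here I expect the real difficulty. Along the heights $c_{n}$ the cross‑sectional curves shrink to points, and rescaling the metric near points $p_{n}\in A'\cap\{s=c_{n}\}$ by (the inverse square of) the injectivity radius makes $C$ converge smoothly to a flat product $T^{2}\times\mathbb{R}$, rescales the mean curvature of $A'$ to $0$, and, via the finite‑area and density bounds, gives uniformly bounded area and curvature. A subsequence of the rescaled surfaces then converges to a properly embedded minimal surface in $T^{2}\times\mathbb{R}$; by the classification of such surfaces, and since the rescaled cross‑sections are embedded circles, the limit is a vertical totally geodesic cylinder over a closed geodesic. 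Thus $A'$ is asymptotic, at the cusp scale, to the minimal vertical annulus over a closed geodesic of $\{s=0\}$ --- an annular end whose cross‑sectional circles decay in length like $e^{-c}$. I would then derive a contradiction from this degeneration: either by producing, from a compact portion of $A'$ lying in a thin slab $\{c\leq s\leq c+2r\}$ (a solid tube of length $2r$ and vanishing girth, hence contained in a ball of radius $<\rho$ once $c$ is large), a tangency of $A'$ to the boundary sphere from the inside, contradicting the lemma; or, equivalently, by appealing to the positive lower bound on the ``size'' (the neck width) of a constant mean curvature $H\geq 1$ end in $\mathbb{H}^{3}$. The delicate point, and the place where the embeddedness of $A$ is essential --- it fails for the immersed annuli of \cite[Proposition~4.8]{meramos1} --- is to arrange any such tangency at an \emph{interior} point of the relevant portion of $A'$ rather than on its boundary circles, which I expect to require a careful inscribed‑ball or sliding argument using that there is always more of $A'$ higher up. Once this contradiction is in hand, no such $A'$ exists, so $N$ admits no proper embedding of $A$ with $H\geq 1$, which proves the lemma and hence Theorem~\ref{thmtwo}.
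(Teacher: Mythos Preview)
Your outline has a genuine gap at the very first analytic step, and it stems from skipping a case distinction that the paper's proof makes essential use of.

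\textbf{The finite-area claim fails.} There is no ``standard density estimate'' giving an upper bound $\mathrm{Area}(\Sigma\cap B_r)=O(r^2)$ for an embedded CMC surface; monotonicity gives \emph{lower} bounds, and upper bounds require curvature control you have not established. In fact the claim is false in one of the two possible configurations. If the inclusion $i_*\colon\pi_1(\Delta)\to\pi_1(\cC)\cong\Z\times\Z$ is trivial (where $\Delta$ is your $A'$), then $\Delta$ lifts \emph{isometrically} to a properly embedded planar domain $\widetilde\Delta$ in the horoball $B\subset\HH^3$. By \cite{chr1} (for $H=1$) and \cite{kkms1} (for $H>1$), such a $\widetilde\Delta$ is $C^2$-asymptotic to a rotationally invariant model end (horosphere, embedded catenoid cousin, or Delaunay), which has bounded second fundamental form and \emph{infinite} area. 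Hence $\Delta$ itself has infinite area, your coarea step collapses, and the whole rescaling/shrinking-cross-section picture never gets off the ground. Your picture of the end converging to a vertical annulus over a closed geodesic of the cusp torus is implicitly the \emph{other} case, where $i_*$ has image $\Z$; you have not addressed the trivial-$i_*$ case at all.

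\textbf{How the paper proceeds.} The paper splits into exactly these two cases. When $i_*$ is trivial, the infinite area of $\widetilde\Delta$ together with its curvature bound yields a one-sided regular neighborhood, so the mean convex side $M\subset B$ bounded by $\widetilde\Delta$ has infinite volume; embeddedness forces $\sigma(M)\cap M=\varnothing$ for any parabolic deck transformation $\sigma$, so $M$ injects into the finite-volume cusp $\cC$ --- a contradiction. When $i_*$ is nontrivial, the lift $\widetilde\Delta$ is invariant under a parabolic translation and $\partial\widetilde\Delta$ lies in a strip of $\partial B$; the paper then runs a maximum-principle barrier argument not with spheres or horospheres but with a foliation by \emph{equidistant hypersurfaces} (tilted planes) of constant mean curvature $H_0=\cos\alpha<1$, obtaining a first interior tangency that forces $H_0\ge H\ge 1$, a contradiction.

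\textbf{Further issues.} Even within the nontrivial-$i_*$ case your ``main step'' is, as you acknowledge, only a plan: the curvature bounds needed for the rescaling compactness, the classification of the limit in $T^2\times\R$, and the inscribed-ball tangency at an \emph{interior} point are all left open. The paper's tilted-plane argument bypasses all of this: it needs neither asymptotics, nor area bounds, nor a blow-up, only that $\partial\widetilde\Delta$ sits in a strip and that some interior point lies above it.
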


\begin{proof}
After passing to the oriented two-sheeted cover of $N$, we may assume
without loss of generality that $N$ is orientable.

Arguing by contradiction suppose that $E\subset N$ is the image of
a proper embedding of $A$ as stated in the lemma.
Since $E$ is proper and $N$ is an orientable hyperbolic 3-manifold
of finite volume, there exists some cusp end $\cC$ of $N$
with the following properties:
\begin{enumerate}[1.]
\item $\partial E\cap \cC = \es$.
\item $\partial \cC$ is a flat torus $\cT(0)$ which
intersects $E$ transversely in a finite
collection of pairwise disjoint simple closed curves.
\item $E\cap \cC$ contains a unique noncompact component
$\D$, which is a planar domain.
\end{enumerate}

Since $\D$ is connected and
$\partial \D$ separates $\partial E$ from the end of $E$,
it follows that
$\partial \Delta \subset \cT(0)$
contains a unique simple closed curve $\g\subset \partial \D$
which generates the first homology group $H_1(E)$.
Moreover, any other boundary component
of $\partial \D$ is homotopically trivial in $E$, and hence,
homotopically trivial in $N$. In particular,
$i_*(\pi_1(\D))$ is either trivial or an infinite cyclic subgroup
of $\pi_1(\cC)$, where $i\colon \D\to \cC$ is the inclusion map
and $i_*\colon \pi_1(\D)\to \pi_1(\cC)$ is the induced map on fundamental
groups, after choosing a base point on $\g$.

We let $\Pi\colon \hn3 \to N$ be
the universal cover of $N$ and let $B\subset \hn3$ be a horoball such that
$\Pi\vert_{B}\colon B \to \cC$ is the universal cover of $\cC$.
Using the half-space model for $\hn3$,
we assume, without loss of generality,
that $B$ is the region $B = \{(x,y,z)\in \hn3\mid z\geq 1\}$.

\begin{case}
$i_*\colon \pi_1(\D) \to \pi_1(\cC)$ is trivial.
\end{case}

\noindent
Since $i_*$ is trivial, $i\colon \D\to \cC$ admits a lift
$\wt{i}\colon \D\to B$, whose image $\wt{\D}$
is a properly embedded planar domain
in $B$ with $\partial \wt{\D}\subset\partial B$.
By~\cite[Theorem~10]{chr1} (for $H = 1$)
and~\cite[Theorem~6.9]{kkms1} (for $H > 1$), it follows that
$\wt{\D}$ is asymptotic to a constant mean curvature $H$
annulus $\cA\subset B$, in the sense that
a subend of $\wt{\D}$ is a graph
in exponential normal coordinates over a subend of $\cA$ with graphing
function converging in the $C^2$-norm to zero for any divergent
sequence of points. Moreover, $\cA$ admits
a vertical axis of rotational
symmetry, and there are
three possibilities:
\begin{enumerate}
\item \label{casehoro}
$H = 1$ and $\cA$ is the end of a horosphere;
\item \label{casecat}
$H = 1$ and $\cA$ is the end of an embedded catenoid cousin;
\item \label{casedelaunay}
$H > 1$ and $\cA$ is the end of a Delaunay surface.
\end{enumerate}
Note that the Catenoid cousin in item~\ref{casecat} is
embedded because any end representative of a nonembedded Catenoid
cousin is never contained in a horoball.

In each case, $\cA$ has bounded norm on its second fundamental form
and infinite area; hence, since $\wt{\D}$ is asymptotic to
$\cA$ in the $C^2$-norm, it follows that $\wt{\D}$ also
has bounded norm of its second fundamental form $\norma{A_{\wt{\D}}}$
and infinite area.
Next, we use these properties to get a contradiction.

Since $\wt{\D}$ is a complete, properly embedded surface
with $\partial \wt{\D}\subset \partial B$, then $\wt{\D}$ defines a
mean convex region $M\subset B$ with
$\partial M\setminus\partial B = \wt{\D}$.
Moreover, since $\hn3$ is a homogeneously regular manifold
and $\wt{\D}$ has compact boundary and separates $B$, the bound
on $\norma{A_{\wt{\D}}}$ gives the existence
of a one-sided regular neighborhood in $M$ of radius $\delta>0$,
see~\cite[Lemma~3.1]{mt3}. Since
$\wt{\D}$ has infinite area and $\delta>0$, then $M$ has
infinite volume.

Let $\sigma\colon B\to B$ be a parabolic translation of $\hn3$ that
is a covering transformation of $\Pi$. Since $\Delta$ is embedded,
then $\sigma(\wt{\Delta})\cap \wt{\Delta} = \es$; thus,
either $\sigma(M)\cap M = \es$ or $\sigma(M)\subset M$. Since
$\sigma$ is a translation, the latter is not possible and we obtain
that $\sigma(M)\cap M = \es$. Hence, $\Pi\vert_M\colon M\to \cC$
is injective, which is a contradiction because $\cC$ has finite volume,
and this proves Lemma~\ref{propnonexistence} when $i_*$ is
trivial.

\begin{case} $i_*\colon \pi_1(\D) \to \pi_1(\cC)$ is
nontrivial.
\end{case}

In this case, $i_*(\pi_1(\D))$ is a $\Z$-subgroup of
$\Z\times \Z = \pi_1(\cC)$, generated by $i_*([\g])$.
Let $\wt{\D} \subset B$ be a connected component
of $\Pi^{-1}(\D)$.
Then $\wt{\D}$ is a complete, noncompact, properly embedded planar domain
in $B$ with $\partial \wt{\D} \subset \partial B$; in particular,
it defines a mean convex region $M\subset B$ with
$\partial M\setminus\partial B = \wt{\D}$.

Also, note that
$\wt{\D}$ is invariant under the parabolic covering transformation
$\theta\colon B\to B$ corresponding to $i_*([\g])\in \pi_1(\cC)$.
Since $\partial \D$ is compact, $\partial \wt{\D}$ stays a finite
distance $c>0$ from a line $l\subset \partial B$, invariant under $\theta$.
In order to clarify the next argument, we apply
a rotation around the $z$-axis of $\hn3$ to
assume that $l = \{(0,y,1)\mid y \in \R\}$; hence,
$\partial \wt{\D} \subset \{(x,y,1)\mid x\in (-c,c),\,y \in\R\}$.
Also, after possibly reflecting through the $xz$-plane, we may
assume that $\{(x,y,1)\mid x \geq c \} \subset \partial M$. See
Figure~\ref{figcaseplane}.

\begin{figure}
\begin{center}
\includegraphics[width=0.7\textwidth]{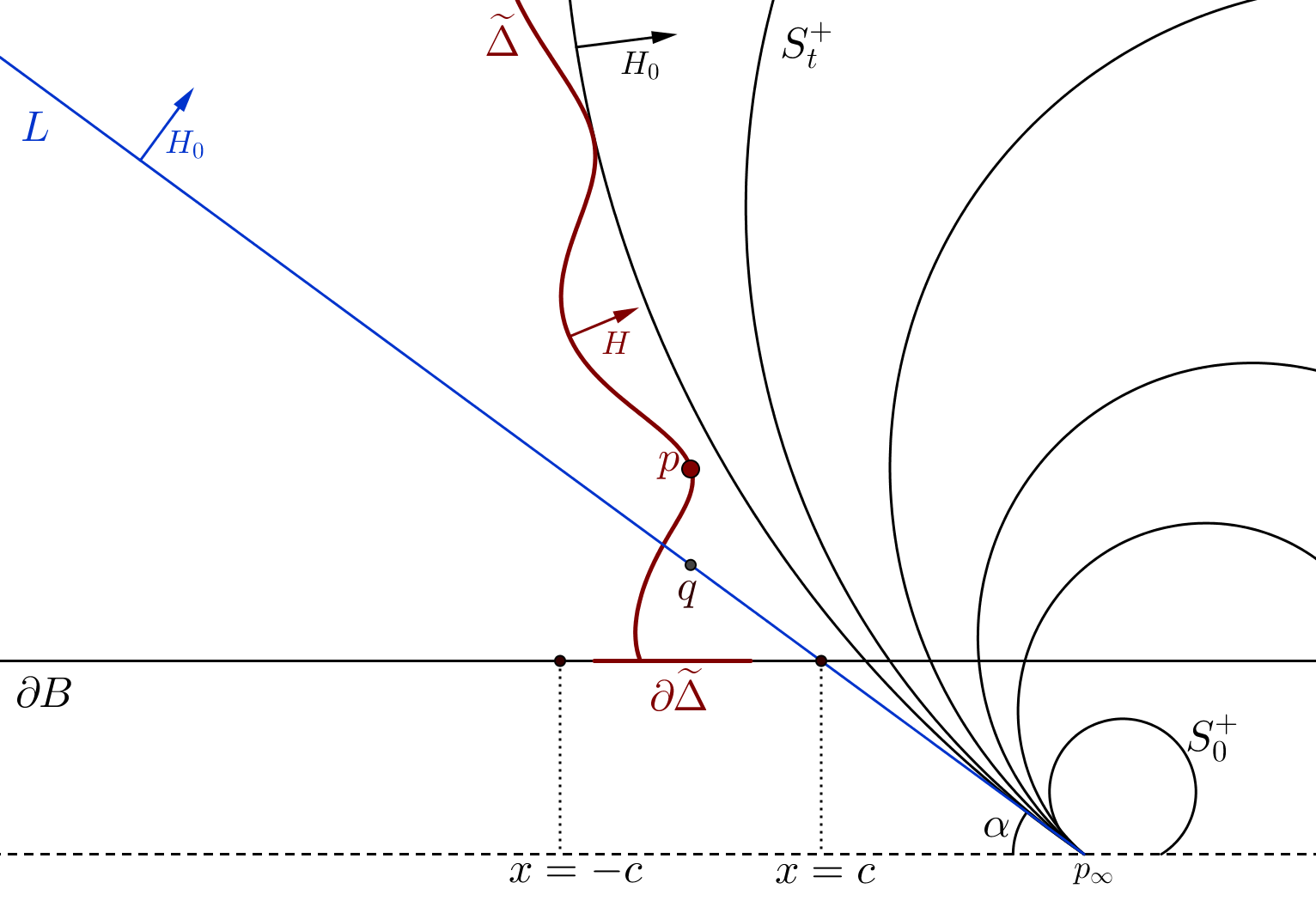}
\caption{\label{figcaseplane} $\wt{\D}$ has constant mean curvature $H\geq 1$
and each hypersphere $S_t^+$ has constant mean curvature $H_0 = \cos(\a)$.
The plane $L$ separates $p$ and $\partial \wt{\D}$,
$p$ lies in the mean convex region of $\hn3$ determined by $L$ and $S_t^+$ converge,
when $t\to \infty$, to $L$.}
\end{center}
\end{figure}

Let $p = (x_1,\,y_1,\,z_1) \in \wt{\D}$ be
such that $z_1 > 1$ and $x_1 \in (-c,\,c)$.
Let $q = (x_1,y_1,\frac{z_1+1}{2})$ and let $L$ be the tilted plane
through $q$ containing the line $\{(c,y,1)\mid y\in\R\}$. Then, $L$ is
an equidistant surface to a totally geodesic vertical plane and,
when oriented with respect to the upper normal vector field,
has constant mean curvature $H_0 = \cos(\a) \in (0,1)$, where $\a$ is the
acute, Euclidean
angle between $L$ and $\{z=0\}$.
Note that $L$ separates $\partial \wt{\D}$ and $p$,
and that $p$ lies in the mean
convex region $U$ defined by $L$ in $\hn3$.

Let $S_0$ be a totally geodesic surface of $\hn3$ such that
$S_0\subset U$, with asymptotic boundary meeting the
asymptotic boundary of $L$ in a single point.
Let $S_0^+$ denote the equidistant surface to $S_0$
with constant mean curvature $H_0$ with respect to the inner
orientation.
Note that we may choose $S_0$
so  that $S_0^+ \cap \partial B = \es$,
as shown in Figure~\ref{figcaseplane}.

Let $M^+\subset \hn3$ denote the mean convex region
defined by $S_0^+$.
Then, there is a product foliation $\{S_t^+\}_{t\geq0}$
of $U\setminus M^+$ such that each surface $S_t^+$ is
equidistant to a totally geodesic surface of $\hn3$ and has
constant mean curvature $H_0$ with respect to the inward orientation;
when $t\to \infty$, the surfaces $S_t^+$ converge to $L$.

Since $p \in \wt{\Delta} \subset B$, then $p\not \in M^+$.
Then, the fact that $p\in U$ implies that
$(\cup_{t\geq0}S_t^+)\cap \wt{\D}\neq \es$.
But because $S_t^+\cap B$ is compact for all $t\geq0$,
there exists a smallest $T>0$ such
that $S_T^+ \cap \wt{\D} \neq \es$.
Since our construction gives that
$\partial \wt{\D}\cap U = \es$,
any point $w$ in $S_T^+ \cap \wt{\D}$
is interior to both $\wt{\D}$ and
$S_{T}^+$, and then $S_T^+$ and $\wt{\D}$ intersect
tangentially at $w$. Finally, since $S_{T}^+\cap B \subset M$,
the mean curvature comparison principle implies that
$H_0 \geq H$, which is a contradiction, and
this proves
Lemma~\ref{propnonexistence}.
\end{proof}

\bibliographystyle{plain}
\bibliography{bill}

\center{William H. Meeks, III at  profmeeks@gmail.com\\
Mathematics Department, University of Massachusetts, Amherst, MA 01003}
\center{Álvaro K. Ramos at alvaro.ramos@ufrgs.br \\
Departmento de Matemática Pura e Aplicada, Universidade Federal do Rio Grande
do Sul, Brazil}

\end{document}